\def \k {\mathbbm{k}}
\def \V {\operatorname{V}_{n,d}}
\def \U {\operatorname{U}_{n,d}}
\def \D {\operatorname{Diag}_{n,d}}
\def \dim {\operatorname{dim}}
\def \rk {\operatorname{Rank}}
\def \GL {\operatorname{GL}}
\def \Im {\operatorname{Im}}
\def \Q {\mathbbm{Q}}
\def \R {\mathbbm{R}}
\def \C {\mathbbm{C}}
\numberwithin{equation}{section}
\numberwithin{table}{section}
\numberwithin{equation}{section}
\newtheorem{theorem}{Theorem}[section]
\newtheorem{lemma}[theorem]{Lemma}
\newtheorem{proposition}[theorem]{Proposition}
\newtheorem{corollary}[theorem]{Corollary}
\newtheorem{definition}[theorem]{Definition}
\newtheorem{example}[theorem]{Example}
\newtheorem{remark}[theorem]{Remark}
\newtheorem{algorithm}[theorem]{Algorithm}
\title{Diagonalizable Higher Degree Forms and Symmetric Tensors}
\thanks{Supported by NSFC 11971181 and 11971449.}
\subjclass[2010]{15A69, 14J70, 11E76}
\keywords{higher degree form, Waring decomposition, symmetric tensor}
\author{Hua-Lin Huang, Huajun Lu, Yu Ye and Chi Zhang}
\address{School of Mathematical Sciences, Huaqiao University, Quanzhou 362021, China}
\email{hualin.huang@hqu.edu.cn}
\address{School of Mathematical Sciences, Huaqiao University, Quanzhou 362021, China}
\email{huajun@hqu.edu.cn}
\address{School of Mathematical Sciences, University of Science and Technology of China, Hefei 230026, China}
\email{yeyu@ustc.edu.cn}
\address{Department of Mathematics, Northeastern University, Shenyang 110819, China}
\email{zhangchi@mail.neu.edu.cn}
\begin{document}

\begin{abstract}
We provide simple criteria and algorithms for expressing homogeneous polynomials as sums of powers of independent linear forms, or equivalently, for decomposing symmetric tensors into sums of rank-1 symmetric tensors of linearly independent vectors. The criteria rely on two facets of higher degree forms, namely Harrison's algebraic theory and some algebro-geometric properties. The proposed algorithms are elementary and based purely on solving linear and quadratic equations. Moreover, as a byproduct of our criteria and algorithms one can easily decide whether or not a homogeneous polynomial or symmetric tensor is orthogonally or unitarily decomposable.
\end{abstract}

\maketitle

\section{Introduction}
The paper is concerned about the problem: given a homogeneous polynomial $f \in \C[x_1, \cdots, x_n]$ of degree $d \ge 3,$ decide whether it is equivalent to $x_1^d + \cdots + x_n^d$ (called diagonalizable) and if so, give efficient algorithms to express the equivalence. It has an equivalent version in terms of symmetric tensors: given a symmetric $d$-tensor $A=(a_{i_1 \cdots i_d})_{1 \le i_1, \cdots, i_d \le n},$ provide criteria and algorithms to express $A=v_1^{\otimes d} + \cdots + v_n^{\otimes d}$ where the $v_i$'s are a basis of $\C^n.$ Though this is a typical problem of classical invariant theory (especially in the polynomial version), it has aroused much interest in the communities of applied mathematics and computing sciences. As a matter of fact, the problem has been explicitly proposed in some previous papers \cite{cm, ka, ks} of signal processing and computational complexity. In addition, assuming a homogeneous polynomial is equivalent to a sum of powers of independent linear forms, people are interested in whether one can choose the change of variables to be orthogonal or unitary. See for example \cite{kolda, ro, bdhr, ko} among many works in this direction.

In this paper we tackle the problem (mainly in the polynomial version) through two important aspects of higher degree forms (another popular synonym of homogeneous polynomials of degree at least $3$): the algebraic theory, a higher analogue of Witt's algebraic theory of quadratic forms, initiated by Harrison \cite{h1, h2}, and algebro-geometric properties of sums of powers. The crux is Harrison's centers of higher degree forms, which can be seen as a generalization of symmetric matrices. In particular, the direct sum decompositions of a form is in bijection with the decompositions of the unit of its center algebra into orthogonal idempotents. On the other hand, it is obvious that diagonalizable forms enjoy nice algebro-geometric properties. Specifically, they are smooth and this imposes very strong restriction, namely the semisimplicity, on their centers. With this one can easily show that a form $f \in \C[x_1, \cdots, x_n]$ is diagonalizable if and only if its center algebra $Z(f) \cong \C \times \cdots \times \C$ ($n$ copies). This enables us to provide several criteria and algorithms for diagonalizing higher degree forms or symmetric tensors. We present in detail a simple algorithm involving only linear equations (for computing the center) and quadratic equations (for computing orthogonal primitive elements and presenting the explicit diagonalization). Furthermore, with a help of Harrison's uniqueness result of the decomposition of a form into a direct sum of indecomposable forms \cite{h1}, we notice that the orthogonal or unitary decomposability of a form (over appropriate ground fields) is a simple property of its diagonalizability. As long as the diagonalization is determined, one can decide whether a form is orthogonally or unitarily decomposable by a straightforward check on any chosen change of variables for the diagonalization.

It is natural to extend the main strategy of the present paper to two more general problems. One is the Waring decompositions of forms, that is decomposing forms into sums of powers of linear forms which are linearly dependent in general. In terms of symmetric tensors, this is the decompositions into symmetric rank-1 terms. A possible way is to perturb a given nondiagonalizable form into a diagonalizable one in more variables. The other is the direct sum decompositions of forms, or equivalently the decompositions of symmetric tensors into sums of block terms. This is reduced to a thorough understanding of the semsimple quotient of the center algebra of any given form. These problems will be addressed in our forthcoming work.

The remainder of this paper is organized as follows. In Section 2 we recall the basic notions and mutual interpretations of higher degree forms, symmetric tensors and symmetric multilinear spaces. Section 3 is devoted to centers and direct sum decompositions of forms. The main results, criteria and algorithms for (orthogonal or unitary) diagonalizations of higher degree forms, are presented in Section 4. In Section 5 we provide some examples to elucidate the criteria and algorithms. Throughout the paper, let $d \ge 3$ be an integer and we consider forms and symmetric tensors of degree $d.$ Although our motivating problem is over the complex numbers $\C,$ in most cases we can work over a general field $\k$ with $\operatorname{char} \k = 0,$ or $\operatorname{char} \k > d.$

\section{Higher degree forms, symmetric tensors and symmetric multilinear spaces}
Higher degree forms are homogeneous polynomials of degree $d \ge 3.$ Similar to the familiar situation of quadratic forms, higher degree forms are naturally associated to symmetric tensors (i.e., symmetric multi-dimensional matrices) and to symmetric multilinear spaces.

Let $f(x_1, \cdots, x_n) \in \k[x_1, \cdots, x_n]$ be a form of degree $d.$ For convenience, we write $f$ in the symmetric way:
\[
f(x_1, \cdots, x_n) = \sum_{1 \le i_1, \cdots, i_d \le n} a_{i_1 \cdots i_d} x_{i_1} \cdots x_{i_d}
\]
where the $a_{i_1 \cdots i_d}$'s are symmetric with respect to their indices.
The resulting symmetric $d$-tensor $A=(a_{i_1 \cdots i_d})_{1 \le i_1, \cdots, i_d \le n}$ is called the Gram tensor of $f.$  We also write the form $f(x_1, \cdots, x_n)=A x^d$ in terms of products of tensors, where $x=(x_1, \cdots, x_n)^T$ is the vector of variables. Corresponding to the form $f$ there is also an associated symmetric $d$-linear space. Let $V$ be a vector space over $\k$ of dimension $n$ with a basis $\alpha_1, \cdots, \alpha_n.$ Define $\Theta \colon V \times \cdots \times V \longrightarrow \k$ by \[\Theta(\alpha_{i_1}, \cdots, \alpha_{i_d})=a_{i_1 \cdots i_d}, \quad \forall 1 \le i_1, \cdots, i_d \le n.\] The pair $(V, \Theta)$ is called the associated symmetric $d$-linear space of $f$ under the basis $\alpha_1, \cdots, \alpha_n.$ One can recover the form $f$ from $(V, \Theta)$ as\[ f(x_1, \cdots, x_n) = \Theta\left(\sum_{1 \le i \le n}x_i\alpha_i, \ \dots, \ \sum_{1 \le i \le n}x_i\alpha_i\right).\]

If $x=Py$ with $P=(p_{ij}) \in \GL(n,\k)$ a change of variables, then the resulting form is \[ g(y_1, \cdots, y_n)=\sum_{1 \le j_1, \cdots, j_d \le n} \ \sum_{1 \le i_1, \cdots, i_d \le n} a_{i_1 \cdots i_d}p_{i_1j_1} \cdots p_{i_dj_d}y_{j_1} \cdots y_{j_d} \] and the associated Gram tensor becomes \[ AP^d:= \left( \sum_{1 \le i_1, \cdots, i_d \le n} a_{i_1 \cdots i_d}p_{i_1j_1} \cdots p_{i_dj_d} \right)_{1 \le j_1, \cdots, j_d \le n}.\] Call $AP^d$ the $d$-congruence of $A$ by $P.$ Let $(\beta_1, \cdots, \beta_n)=(\alpha_1, \cdots, \alpha_n)P,$ then under this new basis the associated symmetric $d$-linear space reads
\[ \Theta(\beta_{j_1}, \cdots, \beta_{j_d}) = \sum_{1 \le i_1, \cdots, i_d \le n} a_{i_1 \cdots i_d}p_{i_1j_1} \cdots p_{i_dj_d}, \quad \forall 1 \le j_1, \cdots, j_d \le n.\]

The form $f$ is called diagonalizable over $\k$ if
\begin{equation*}
f(x_1, \cdots, x_n) = \sum_{1 \le i \le r} \lambda_i l_i(x_1, \cdots, x_n)^d,
\end{equation*}
where the $\lambda_i$'s are nonzero constants in $\k$ and the $l_i$'s are independent $\k$-linear forms. Clearly, if this is the case then $r \le n.$ Accordingly, the symmetric tensor $A$ is called diagonalizable over $\k$ if there exists a $P \in \GL(n,\k)$ such that $AP^d$ is diagonal (i.e., the entries are $0$ unless the indices are identical), and the symmetric $d$-linear space $(V, \Theta)$ is called diagonalizable if there exists a basis $\beta_1, \cdots, \beta_n$ of $V$ such that $\Theta(\beta_{j_1}, \cdots, \beta_{j_d}) = 0$ unless $j_1=\cdots=j_d.$ Moreover, if $l_i(x_1, \cdots, x_n)=v_{i1}x_1 + \cdots + v_{in}x_n \ (1 \le i \le r)$ and denote $v_i=(v_{i1}, \cdots, v_{in}),$ then the corresponding decomposition of the associated symmetric tensor is
\[ A= \lambda_1 v_1^{\otimes d} + \cdots + \lambda_r v_r^{\otimes d}. \]
A diagonalizable form is called orthogonally or unitarily diagonalizable if there is an orthogonal or unitary change of variables for the diagonalization. Of course, in this case one needs to work on appropriate ground fields so that orthogonal and unitary groups are well-defined. We also remark that one may replace $r$ by $n$ in the previous definition, as one can consider only nondegenerate forms without loss of generality. See Section 3 for more explicit explanation.

In the following we don't distinguish the three synonyms, namely higher degree forms, symmetric tensors and symmetric multilinear spaces. The discussions are mostly presented in terms of higher degree forms. One can easily shift to the versions of symmetric tensors and symmetric multilinear spaces.

\section{Centers and direct sum decompositions}
In his pioneering work \cite{h1} of algebraic theory of higher degree forms, Harrison introduced the notion of centers (in terms of symmetric multilinear spaces) to deal with the direct sum decompositions of forms.

\begin{definition}
Let $(V,\Theta)$ be a symmetric $d$-linear space. The center, denoted by $Z(V, \Theta),$ of $(V,\Theta)$ is defined as \[ \{ \phi \in \operatorname{End}(V) \mid \Theta(\phi(v_1), v_2, \cdots, v_d)=\Theta(v_1, \phi(v_2), \cdots, v_d), \ \forall v_1, v_2, \cdots, v_d \in V \}. \]
\end{definition}

Let $f$ be the associated degree $d$ form of $(V, \Theta)$ under a basis $\alpha_1, \cdots, \alpha_n.$ By $H$ we denote the Hessian matrix $(\frac{\partial^2 f}{\partial x_i \partial x_j})_{1 \le i,\ j \le n}$ of the form $f$ and by $A^{(i_3 \cdots i_d)}$ the $n \times n$ matrix $(a_{i_1i_2i_3 \cdots i_d})_{1 \le i_1, i_2 \le n}$ where $A=(a_{i_1 \cdots i_d})_{1 \le i_1, \cdots, i_d \le n}$ is the associated symmetric $d$-tensor of $f.$ Then we have the following equivalent definitions of centers in terms of forms (see \cite{h2}) and tensors.

\begin{lemma} \label{edc} Keep the above notations. Then we have
\[Z(V, \Theta) \cong \{ X \in \k^{n \times n} \mid (HX)^T =  HX \} = \{ X \in \k^{n \times n} \mid X^TA^{(i_3 \cdots i_d)}=A^{(i_3 \cdots i_d)}X, \ \forall 1 \le i_3, \cdots, i_d \le n \}. \]
\end{lemma}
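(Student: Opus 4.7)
The plan is to fix the basis $\alpha_1,\ldots,\alpha_n$ and identify an endomorphism $\phi \in \End(V)$ with its matrix $X = (x_{ij}) \in \k^{n \times n}$ via $\phi(\alpha_j) = \sum_i x_{ij}\alpha_i$. By multilinearity, testing the defining identity of $Z(V,\Theta)$ on basis vectors suffices. Expanding
\[
\Theta(\phi(\alpha_{j_1}),\alpha_{j_2},\alpha_{i_3},\ldots,\alpha_{i_d}) = \Theta(\alpha_{j_1},\phi(\alpha_{j_2}),\alpha_{i_3},\ldots,\alpha_{i_d})
\]
in terms of the Gram entries gives $\sum_i x_{ij_1}a_{ij_2 i_3\cdots i_d} = \sum_i x_{ij_2}a_{j_1 i i_3\cdots i_d}$, and rewriting both sides as matrix products yields exactly $X^T A^{(i_3\cdots i_d)} = A^{(i_3\cdots i_d)}X$ for every tuple $(i_3,\ldots,i_d)$. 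This establishes the second equality in the lemma.

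Next, I would connect the $A^{(i_3\cdots i_d)}$ slice equations to the Hessian. Differentiating $f$ twice and using that $A$ is symmetric in its indices shows
\[
H = d(d-1)\sum_{1 \le i_3,\ldots,i_d \le n} A^{(i_3\cdots i_d)}\, x_{i_3}\cdots x_{i_d}.
\]
Since $H$ is symmetric, the condition $(HX)^T = HX$ is the same as $X^T H = HX$. Summing the slice identities $X^T A^{(i_3\cdots i_d)} = A^{(i_3\cdots i_d)}X$ multiplied by the appropriate monomial immediately gives $X^T H = HX$, proving one direction.

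For the converse, I would compare coefficients of monomials in the polynomial identity $X^T H = HX$. For a fixed multiset of indices $\{i_3,\ldots,i_d\}$, the coefficient of the corresponding monomial on each side, after collecting terms and using the symmetry of $A^{(i_3\cdots i_d)}$ in its superscripts, is a nonzero multinomial coefficient times $A^{(i_3\cdots i_d)}$ (respectively $X^T A^{(i_3\cdots i_d)}$). Since $\operatorname{char}\k = 0$ or $\operatorname{char}\k > d$, these multinomials are invertible, and one recovers $X^T A^{(i_3\cdots i_d)} = A^{(i_3\cdots i_d)}X$ for every ordered tuple. Chaining the two equivalences then yields the stated isomorphism.

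The argument is essentially bookkeeping; the only place that needs a moment's care is the converse from $(HX)^T = HX$ to the individual slice identities, which is the step that requires the characteristic hypothesis so that the combinatorial factors arising from grouping ordered tuples into monomials do not vanish.
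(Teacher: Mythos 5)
Your proof is correct, and it fills a gap the paper does not close itself: for this lemma the authors simply cite Harrison--Pareigis (\cite{h2}) for the Hessian description and give no argument for either equality. The route you take is the natural one. The first equality is exactly the coordinate computation you indicate: with $\phi(\alpha_j)=\sum_i x_{ij}\alpha_i$, the center condition tested on $(\alpha_{j_1},\alpha_{j_2},\alpha_{i_3},\ldots,\alpha_{i_d})$ unwinds entrywise to $(X^TA^{(i_3\cdots i_d)})_{j_1j_2}=(A^{(i_3\cdots i_d)}X)_{j_1j_2}$. For the second, your identity
\[
H = d(d-1)\sum_{1\le i_3,\ldots,i_d\le n} A^{(i_3\cdots i_d)}\,x_{i_3}\cdots x_{i_d}
\]
is correct, the symmetry of $H$ turns $(HX)^T=HX$ into $X^TH=HX$, and the forward direction is immediate by summing slices. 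The converse by comparing monomial coefficients is also right; because the slice matrices $A^{(i_3\cdots i_d)}$ depend only on the multiset $\{i_3,\ldots,i_d\}$, each monomial's coefficient is a multinomial multiple of a single $A^{(i_3\cdots i_d)}X$ on one side and of $X^TA^{(i_3\cdots i_d)}$ on the other, and those multinomials are units under the paper's standing hypothesis $\operatorname{char}\k=0$ or $\operatorname{char}\k>d$. One tiny slip: in describing the coefficient comparison you wrote ``$A^{(i_3\cdots i_d)}$'' where you clearly mean ``$A^{(i_3\cdots i_d)}X$''; the surrounding argument makes the intent unambiguous, so this is merely cosmetic.
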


\begin{remark} \label{cb}
The definition of centers by $(V,\Theta)$ is coordinate free, while the other versions by Hessian matrices and slices of tensors are not. Let $P \in \GL(n,\k)$ be a change of coordinates. If we write $$Z(A)=\{ X \in \k^{n \times n} \mid X^TA^{(i_3 \cdots i_d)}=A^{(i_3 \cdots i_d)}X, \ \forall 1 \le i_3, \cdots, i_d \le n \},$$ then $Z(AP^d)=P^{-1}Z(A)P=\{P^{-1}XP \mid X \in Z(A)\}.$

Indeed, assume $P=(p_{ij})$ and denote $B:=AP^d.$ Notice that
\begin{eqnarray*}
  B^{(j_3 \cdots j_d)} &=& \left( \sum_{1 \le i_1, \cdots, i_d \le n} a_{i_1 \cdots i_d}p_{i_1j_1}p_{i_2j_2}p_{i_3j_3} \cdots p_{i_dj_d} \right)_{1 \le j_1, j_2 \le n} \\
   &=& P^T\left( \sum_{1 \le i_3, \cdots, i_d \le n} a_{i_1 \cdots i_d}p_{i_3j_3} \cdots p_{i_dj_d} \right)_{1 \le i_1, i_2 \le n}P \\
   &=& P^T\left( \sum_{1 \le i_3, \cdots, i_d \le n} p_{i_3j_3} \cdots p_{i_dj_d} A^{(i_3 \cdots i_d)} \right)P.
\end{eqnarray*}
Denote $C^{(j_3 \cdots j_d)}=p_{i_3j_3} \cdots p_{i_dj_d} A^{(i_3 \cdots i_d)}$ and $P^{-1}=(q_{ij}).$ Then we have
\[ A^{(i_3 \cdots i_d)}=\sum_{1 \le j_3, \cdots, j_d \le n} q_{j_3 i_3 } \cdots q_{j_d i_d } C^{(j_3 \cdots j_d)}. \]
Now it follows that
\begin{eqnarray*}
  Z(B) &=& \{ Y \in \k^{n \times n} \mid Y^TB^{(j_3 \cdots j_d)}=B^{(j_3 \cdots j_d)}Y, \ \forall 1 \le j_3, \cdots, j_d \le n \} \\
   &=& \{ Y \in \k^{n \times n} \mid Y^T P^T A^{(i_3 \cdots i_d)}P=P^TA^{(i_3 \cdots i_d)}PY, \ \forall 1 \le i_3, \cdots, i_d \le n \} \\
   &=& \{ Y \in \k^{n \times n} \mid PYP^{-1} \in Z(A) \}= P^{-1}Z(A)P.
\end{eqnarray*}
\end{remark}

Now we recall the definition of direct sum decompositions of forms.
\begin{definition}
A form $f$ is called a direct sum if, after a change of variables, it can be written as a sum of $t \ge 2$ nonzero forms in disjoint sets of variables:
\begin{equation*}\label{ds}
f=f_1(x_1, \cdots, x_{a_1}) + \cdots + f_t(x_{a_{t-1}+1}, \cdots, x_n).
\end{equation*}
If this is not the case, then $f$ is said to be indecomposable. On the other extreme, $f$ is diagonalizable if the $f_i$'s are forms in only one variable.
\end{definition}

It is clear that direct sum decompositions of $f$ correspond to quasi-diagonalizations of its Gram tensor by $d$-congruence, and to orthogonal decompositions of its associated symmetric $d$-linear space (see \cite{h1}).

In this paper we are mainly interested in diagonalizable forms. For this there is no loss of generality in assuming the forms are nondegenerate as in \cite{h1}, that is no variable can be removed by an invertible linear change of variables. In other words, a form $f \in \k[x_1, \cdots, x_n]$ is degenerate if there exists a change of variables $x=Py$ such that the resulting form $g$ involves less than $n$ variables. Or equivalently, in terms of symmetric multilinear spaces, there exists $0 \ne u \in V$ such that $\Theta(u, v_2, \cdots, v_d)=0$ for all $v_2, \cdots, v_d \in V.$ For the associated symmetric $d$-tensor $A,$ let $A_{i_1}$ denote the $(d-1)$-tensor $A=(a_{i_1 \cdots i_d})_{1 \le i_2, \cdots, i_d \le n}.$ Then $f$ is degenerate if the $A_{i_1}$'s are linearly dependent in the space of $(d-1)$-tensors. Similar to the quadratic case, define the radical of the symmetric $d$-linear space by \[\ker \Theta := \{ u \in V \mid \Theta(u, v_2, \cdots, v_d)=0, \quad \forall v_2, \cdots, v_d \in V \}\] and the slicing rank of the symmetric $d$-tensor by \[ \rk A := \rk \{ A_1, \cdots, A_n \}, \]
namely the size of any maximally linearly independent subset of $\{ A_1, \cdots, A_n \}.$ Note that $\rk A$ is independent of the ways of slicing along the indices as $A$ is symmetric. The rank of $f,$ denoted by $\rk f,$ is defined to be the essential number $r$ of variables of $f,$ that is, there exists an invertible change of variables $x=Py$ such that the resulting form $g$ involves exactly $r$ variables, say $g=g(y_1, \cdots, y_r)$ and is nondegenerate in $\k[y_1, \cdots, y_r].$

\begin{lemma} \label{rof}
Keep the previous notations. Then $$\rk f = \rk A=n-\dim \ker \Theta.$$
\end{lemma}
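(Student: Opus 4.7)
The plan is to establish the two equalities $\rk A = n - \dim \ker \Theta$ and $\rk f = \rk A$ in turn, since together they yield the lemma.

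For the first equality, I would consider the $\k$-linear map $\Phi \colon V \to \k^{n^{d-1}}$ sending $\alpha_i$ to the slice $A_i$. Under the identification $u = \sum u_i \alpha_i \leftrightarrow (u_1, \ldots, u_n)$, the condition $u \in \ker \Theta$ reads $\sum u_i A_i = 0$ in $\k^{n^{d-1}}$ (evaluating $\Theta(u, \alpha_{i_2}, \ldots, \alpha_{i_d})$ for all multi-indices), so $\ker \Phi = \ker \Theta$. On the other hand, $\dim \Im \Phi = \rk\{A_1, \ldots, A_n\} = \rk A$. The rank-nullity theorem then yields $n = \rk A + \dim \ker \Theta$.

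For the second equality, I would first show that slicing rank is invariant under $d$-congruence. Expanding $B = AP^d$ along the lines of Remark \ref{cb}, one checks that each slice $B_j$ equals $\sum_i p_{ij} C_i$, where $C_i$ is the $(d-1)$-tensor obtained by acting by $P$ in each of the remaining $d-1$ modes of $A_i$. Since $P^{\otimes(d-1)}$ acts invertibly on $\k^{n^{d-1}}$, one has $\rk\{C_1, \ldots, C_n\} = \rk A$; and since $(p_{ij})$ is invertible, $\rk\{B_1, \ldots, B_n\} = \rk\{C_1, \ldots, C_n\}$. Hence $\rk B = \rk A$. Now to link $\rk f$ and $\rk A$: if $\rk f = r$, pick $P$ so that $g(y) = f(Py)$ depends only on $y_1, \ldots, y_r$ nondegenerately; its Gram tensor $B = AP^d$ has $B_j = 0$ for $j > r$, while the first equality applied to the nondegenerate $g$ forces $\{B_1, \ldots, B_r\}$ to be linearly independent, so $\rk A = \rk B = r$. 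Conversely, setting $r := n - \dim \ker \Theta$, choose a basis $\beta_1, \ldots, \beta_n$ of $V$ with $\beta_{r+1}, \ldots, \beta_n$ a basis of $\ker \Theta$; any evaluation $\Theta(\beta_{j_1}, \ldots, \beta_{j_d})$ involving some $j_s > r$ vanishes by definition of $\ker \Theta$, so in the corresponding coordinates $f$ depends only on $y_1, \ldots, y_r$, with the restriction nondegenerate since no $\beta_j$ for $j \leq r$ lies in $\ker \Theta$. Hence $\rk f \leq r = \rk A$, and combining both directions gives equality.

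The main obstacle is the tensorial bookkeeping verifying that $d$-congruence preserves slicing rank; fortunately, the factorization needed is essentially the one already laid out in Remark \ref{cb}, and the rest of the argument is standard rank-nullity plus a careful choice of basis respecting $\ker \Theta$.
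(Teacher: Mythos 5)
Your proof is correct and follows exactly the route the paper signals. The paper itself offers no written proof, only the remark that one can argue ``mimicking the quadratic situation''; your argument does precisely that, establishing $\rk A = n - \dim\ker\Theta$ via rank--nullity applied to the slicing map $V \to \k^{n^{d-1}}$, and $\rk f = \rk A$ by checking invariance of slicing rank under $d$-congruence together with a basis of $V$ adapted to $\ker\Theta$ (which also realizes the paper's cited fact that any form is a direct sum of a nondegenerate form and a zero form).
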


\begin{remark}
One can easily give a proof mimicking the quadratic situation. As pointed out in \cite{h1}, any form can be decomposed as the direct sum of a nondegenerate form and a zero form. The slicing rank is Kruskal's 1-slabs rank \cite{kr} for symmetric tensors. The essential number of variables of forms was also considered in \cite{c} via Apolarity Theory and Catalecticant Matrices.
\end{remark}

Now we recall some important facts about centers and direct sum decompositions of forms which are useful in this paper. See \cite[Propositions 2.3, 4.1 and 4.3]{h1} for proofs.

\begin{proposition} \label{pc}
Suppose $f \in \k[x_1, \cdots, x_n]$ is a nondegenerate higher degree form. Then
\begin{itemize}
\item[(1)] The center $Z(f)$ is a commutative subalgebra of $\operatorname{End}(V).$
\item[(2)] If $f=f_1 + \cdots + f_t$ is a direct sum decomposition, then $Z(f) \cong Z(f_1) \times \cdots \times Z(f_t).$
\item[(3)] If $1=\epsilon_1+\cdots+\epsilon_t$ is a decomposition of orthogonal idempotents for the unit of $Z(f),$ then $f=f_1 + \cdots + f_t$ is a direct sum decomposition with the corresponding orthogonal decomposition of symmetric $d$-linear space $V=\Im \epsilon_1 \oplus \cdots \oplus \Im \epsilon_t.$
\item[(4)] $f$ is indecomposable over $\k$ if and only if $Z(f)$ is a local $\k$-algebra.
\item[(5)] $f$ is diagonalizable over $\k$ if and only if $Z(f) \cong \k \times \cdots \times \k$ ($n$ copies).
\item[(6)] The decomposition of $f$ into a direct sum of indecomposable forms is unique up to equivalence and permutation of indecomposable summands.
\item[(7)] If $K/\k$ is a field extension, by $f_K$ it is meant treating $f \in K[x_1, \cdots, x_n],$ then $Z(f_K) \cong Z(f) \otimes_\k K.$
\end{itemize}
\end{proposition}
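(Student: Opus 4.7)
The plan is to prove the seven items in a chain, leveraging two workhorses: the full symmetry of $\Theta$, which lets any $\phi\in Z(f)$ be transported from slot $1$ to any other slot, and the nondegeneracy of $f$, which allows one to cancel the outer argument in an identity $\Theta(u,v_2,\ldots,v_d)=0$ holding for all $v_2,\ldots,v_d$. Item (1) is the foundation. For $\phi,\psi\in Z(f)$, moving $\phi$ then $\psi$ yields
\[
\Theta(\phi\psi v_1,v_2,\ldots,v_d)=\Theta(\psi v_1,\phi v_2,v_3,\ldots,v_d)=\Theta(v_1,\psi\phi v_2,v_3,\ldots,v_d),
\]
while carrying out the analogous computation with the roles of $\phi$ and $\psi$ exchanged gives the same identity with $\phi\psi$ and $\psi\phi$ interchanged. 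Comparing the two and invoking nondegeneracy forces $\phi\psi=\psi\phi$; the same computation then certifies $\phi\psi\in Z(f)$, so $Z(f)$ is a commutative subalgebra of $\operatorname{End}(V)$.

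For (2), the crucial observation is that the projector $\epsilon_i$ onto each summand $V_i$ of an orthogonal decomposition $V=V_1\oplus\cdots\oplus V_t$ already lies in $Z(f)$: every mixed term in $\Theta(\epsilon_iv_1,v_2,\ldots,v_d)$ vanishes on orthogonality grounds, leaving only $\Theta(\epsilon_iv_1,\ldots,\epsilon_iv_d)$, which is symmetric in slots. Commutativity from (1) then forces any $\phi\in Z(f)$ to preserve each $V_i$, and restriction gives $Z(f)\cong\prod_iZ(f_i)$. For (3), conversely, given $1=\epsilon_1+\cdots+\epsilon_t$ in $Z(f)$ and $V_i:=\Im\epsilon_i$, for $v_k\in V_{i_k}$ with $i_1\neq i_2$ one computes
\[
\Theta(v_1,v_2,\ldots,v_d)=\Theta(\epsilon_{i_1}v_1,\epsilon_{i_2}v_2,\ldots,v_d)=\Theta(v_1,\epsilon_{i_1}\epsilon_{i_2}v_2,\ldots,v_d)=0,
\]
so $\Theta$ is orthogonal with respect to $V=\bigoplus_i\Im\epsilon_i$ and $f$ splits accordingly.

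Items (4)--(6) are then largely formal. A finite-dimensional commutative $\k$-algebra is local iff $1$ has no nontrivial decomposition into orthogonal idempotents, which by (2)(3) is exactly indecomposability of $f$; this gives (4). For (5), if $f$ is diagonal then (2) together with the trivial $Z(\lambda x^d)\cong\k$ yields $Z(f)\cong\k^{n}$; conversely, $\k\times\cdots\times\k$ has precisely $n$ primitive idempotents summing to $1$, and (3) then produces an orthogonal splitting of $V$ into $n$ nonzero pieces whose dimensions sum to $n$, forcing each to be one-dimensional and yielding a diagonalization. Item (6) is Krull--Schmidt for commutative Artinian algebras: the decomposition of $1\in Z(f)$ into primitive orthogonal idempotents is unique up to permutation, and by (2) and (4) these idempotents match the indecomposable summands of $f$. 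Finally (7) is immediate from Lemma~\ref{edc}: $Z(f)$ is cut out as the kernel of a system of $\k$-linear equations on the entries of $X\in\k^{n\times n}$, and forming kernels commutes with field extension while composition is defined over $\k$.

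The main obstacle is the commutativity assertion in (1): it is the only place where nondegeneracy genuinely enters, and every subsequent item depends on it, either directly through the use of projectors in (2)(3) or indirectly through the commutative Artinian structure theory underlying (4)--(6). Once that hurdle is cleared the remainder is essentially bookkeeping.
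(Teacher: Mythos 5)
The paper itself gives no proof of this proposition: it simply cites \cite[Propositions 2.3, 4.1, 4.3]{h1} for all seven items, and only unwinds item (3) into an explicit coordinate computation for later algorithmic use. So the real question is whether your blind proof is correct. Items (2)--(7) are fine granted (1), but your argument for commutativity in (1) has a genuine gap, and since every later item leans on the commutative-algebra structure of $Z(f)$, the gap matters.

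Your two chains
\[
\Theta(\phi\psi v_1,v_2,\ldots)=\Theta(\psi v_1,\phi v_2,\ldots)=\Theta(v_1,\psi\phi v_2,\ldots),
\qquad
\Theta(\psi\phi v_1,v_2,\ldots)=\Theta(\phi v_1,\psi v_2,\ldots)=\Theta(v_1,\phi\psi v_2,\ldots),
\]
share no common term, and in fact the second chain is the first one read backwards after applying the symmetry of $\Theta$ in slots $1,2$ and relabelling $v_1\leftrightarrow v_2$. ``Comparing the two'' therefore compares an identity with a rewording of itself and yields nothing. Here is a litmus test showing that something essential is missing: your manipulations touch only slots $1$ and $2$, so they would go through verbatim for $d=2$; but for a nondegenerate quadratic form with Gram matrix $H$, the center $\{X\in\k^{n\times n}: X^{T}H=HX\}$ is the space of $H$-symmetric endomorphisms, which is neither closed under composition nor commutative. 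Any correct proof must use a third slot, that is, must use $d\ge 3$ substantively; this is exactly where the hypothesis enters.

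The patch is one extra move: park $\psi$ temporarily in slot $3$,
\[
\Theta(\psi v_1,\phi v_2,v_3,\ldots)=\Theta(v_1,\phi v_2,\psi v_3,\ldots)=\Theta(\phi v_1,v_2,\psi v_3,\ldots)=\Theta(\phi v_1,\psi v_2,v_3,\ldots).
\]
This identifies the middle terms of your two chains, hence $\Theta(\phi\psi v_1,v_2,\ldots)=\Theta(\psi\phi v_1,v_2,\ldots)$ for all $v_j$, and nondegeneracy then gives $\phi\psi=\psi\phi$; the same sequence of moves also certifies $\phi\psi\in Z(f)$. With this inserted, the remainder of your argument---projectors of an orthogonal decomposition lie in $Z(f)$, idempotent decompositions of $1$ correspond to orthogonal splittings of $V$, Krull--Schmidt uniqueness of primitive idempotents, and $\k$-linearity of the defining equations for (7)---is complete and correct.
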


For later applications, we present some more explanations in particular for item (3). Let $(V, \Theta)$ be the associated symmetric $d$-linear space of $f$ under the basis $\alpha_1, \cdots, \alpha_n.$ For the given idempotent decomposition $1=\epsilon_1+\cdots+\epsilon_t,$ assume $\dim \Im \epsilon_i = n_i$ and choose a basis $\beta_{i1}, \cdots, \beta_{in_i}$ for $\Im \epsilon_i.$ Then $\beta_{11}, \cdots, \beta_{1n_1}, \cdots, \beta_{t1}, \cdots, \beta_{tn_t}$ make a basis of $V$ and there exists a $P=(p_{ij}) \in \GL(n,\k)$ such that $$(\alpha_1, \cdots, \alpha_n)=(\beta_{11}, \cdots, \beta_{1n_1}, \cdots, \beta_{t1}, \cdots, \beta_{tn_t})P.$$ For convenience, simplify the indices of the new basis as $\beta_1, \cdots, \beta_n.$ Then we have
\begin{eqnarray*}
  f(x_1, \cdots, x_n) &=& \Theta\left(\sum_{1 \le i \le n}x_i\alpha_i, \ \dots, \ \sum_{1 \le i \le n}x_i\alpha_i\right) \\
   &=& \Theta\left(\sum_{1 \le j \le n} \sum_{1 \le i \le n}p_{ji}x_i\beta_j, \ \dots, \ \sum_{1 \le j \le n} \sum_{1 \le i \le n}p_{ji}x_i\beta_j\right) \\
   &=& \Theta\left(\sum_{j=1}^{n_1} \sum_{1 \le i \le n}p_{ji}x_i\beta_j, \ \dots, \ \sum_{j=1}^{n_1} \sum_{1 \le i \le n}p_{ji}x_i\beta_j\right) \\
   & & + \cdots +\\
   & & \Theta\left(\sum_{j=n-n_t+1}^{n} \sum_{1 \le i \le n}p_{ji}x_i\beta_j, \ \dots, \ \sum_{j=n-n_t+1}^{n} \sum_{1 \le i \le n}p_{ji}x_i\beta_j\right).
\end{eqnarray*}
Note that the last equality is due to the orthogonality of the $\epsilon_i$'s. Let $y=Px$ be the change of variables and denote
\begin{eqnarray*}
  g_1(y_1, \cdots, y_{n_1}) &=& \Theta\left(\sum_{j=1}^{n_1} y_j\beta_j, \ \dots, \ \sum_{j=1}^{n_1} y_j\beta_j\right), \\
   &\vdots&  \\
  g_t(y_{n-n_t+1}, \cdots, y_{n}) &=& \Theta\left(\sum_{j=n-n_t+1}^{n} y_j\beta_j, \ \dots, \ \sum_{j=n-n_t+1}^{n} y_j\beta_j\right).
\end{eqnarray*}
Now we have the desired direct sum decomposition $$f=g_1(y_1, \cdots, y_{n_1})+\cdots+g_t(y_{n-n_t+1}, \cdots, y_{n}).$$

It is well known that all quadratic forms over fields with $2 \ne 0$ are diagonalizable. On the contrary, as an easy consequence of Lemma \ref{edc} and Proposition \ref{pc}, we have

\begin{proposition} \label{g}
A general higher degree form is central and absolutely indecomposable in the sense of Harrison \cite{h1}. That is, its center is the ground field and it remains indecomposable over any extension of the ground field.
\end{proposition}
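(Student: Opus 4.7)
The plan is to reduce everything to the statement that $Z(f) \cong \k$ for a general $f$; both centrality and absolute indecomposability then follow mechanically from items (4) and (7) of Proposition \ref{pc}.

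First I would use Lemma \ref{edc} to view $Z(f)$ as the $\k$-linear solution space of the system
\[ X^T A^{(i_3 \cdots i_d)} = A^{(i_3 \cdots i_d)}X, \quad 1 \le i_3, \cdots, i_d \le n, \]
inside $\k^{n \times n}$. The coefficients of this system in the $n^2$ entries of $X$ are themselves linear in the entries of the Gram tensor $A$, and hence polynomial in the coefficients of $f$. By the standard lower semicontinuity of the rank of a polynomially varying matrix, the locus
\[ \mathcal U := \{ f \mid \dim_\k Z(f) \le 1 \} \]
is Zariski open in the affine space of degree-$d$ forms in $n$ variables. Since the identity always lies in $Z(f),$ on $\mathcal U$ one automatically has $Z(f) = \k \cdot I \cong \k.$

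Second I would argue $\mathcal U \neq \emptyset$ by exhibiting one explicit form with trivial center, for instance
\[ f_0 = x_1^d + \cdots + x_n^d + (x_1 + \cdots + x_n)^d. \]
Up to nonzero integer scalars, the tensor slices of its Gram tensor are $E_{kk} + J$ when $i_3 = \cdots = i_d = k$ and $J$ in the remaining cases, where $J$ denotes the all-ones matrix. An elementary computation using the commutation relations with these slices forces any $X \in Z(f_0)$ first to be diagonal and then to have equal diagonal entries, so $Z(f_0) = \k \cdot I.$ As the space of forms is an irreducible variety, nonemptiness of $\mathcal U$ promotes it to density.

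Once this is in hand, for a general $f$ the center $Z(f) \cong \k$ is trivially local, and Proposition \ref{pc}(4) yields that $f$ is indecomposable. For any field extension $K/\k,$ Proposition \ref{pc}(7) gives $Z(f_K) \cong Z(f) \otimes_\k K \cong K,$ again a field and hence local, so applying (4) over $K$ one concludes that $f_K$ is indecomposable, i.e., $f$ is absolutely indecomposable. The main obstacle is the second step: semicontinuity is free, but verifying trivial center for a concrete witness requires the hands-on slice computation above, and in the degenerate regimes where every form is already diagonalizable (e.g., binary cubics), $\mathcal U$ is empty, so a mild bound on $(n,d)$ is implicitly needed for the proposition to have content.
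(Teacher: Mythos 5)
Your route is correct but genuinely different from the paper's. The paper argues directly with the generic form: treating the coefficients $a_I$ as indeterminates, it uses the Hessian version of Lemma \ref{edc} to write the centering condition as $\sum_k \frac{\partial^2 f}{\partial x_i\partial x_k}c_{kj} = \sum_k \frac{\partial^2 f}{\partial x_j\partial x_k}c_{ki}$ for $i\ne j$, claims the collection $\{\frac{\partial^2 f}{\partial x_i\partial x_k}\}_k \cup \{\frac{\partial^2 f}{\partial x_j\partial x_k}\}_k$ is linearly independent for generic $f$, and reads off $c_{ii}=c_{jj}$ and $c_{ij}=0$, giving $Z(f)\cong\k$ in one stroke; items (4) and (7) of Proposition \ref{pc} then finish, exactly as you do. You instead prove openness of $\{\dim_\k Z(f)\le 1\}$ via semicontinuity of rank, supply an explicit witness $f_0 = \sum_i x_i^d + (\sum_i x_i)^d$, and invoke irreducibility of $\V$. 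The semicontinuity-plus-witness route has an advantage the paper's proof lacks: the paper's linear-independence assertion cannot hold as stated when $d=3$, because there are $2n-1$ distinct second partials sitting inside the space of degree-$(d-2)$ forms, whose dimension $\binom{n+d-3}{d-2}$ equals only $n$ when $d=3$. Your witness sidesteps this issue, although the "elementary computation" deserves one more sentence for $d=3$: there are then no slices equal to $J$ alone, yet for $n\ge 3$ the commutation relations with $J+E_{kk}$ still force all column sums of $X$ to agree (compare entries indexed by $i,j\ne k$) and hence $X$ diagonal with equal diagonal entries. Your caveat about binary cubics is also exactly right and is not acknowledged in the paper: over $\C$ every smooth binary cubic is diagonalizable, so Proposition \ref{g} (and consequently Proposition \ref{cd}) is false at $(n,d)=(2,3)$, and your witness duly produces a 2-dimensional center there. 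So the proposal is sound and, after the small expansion just indicated, is more carefully hedged than the published argument.
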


\begin{proof}
A general $n$-variate degree $d$ form can be written as
\[ f(x_1, \cdots, x_n) = \sum_{|I|=d} a_Ix^I.\]
In this notation $I=(i_1, \cdots, i_n)$ is a multi-index, $0 \le i_k \le n$ for each $k,$ ranging through ${n+d-1}\choose{d}$ values for which $|I|:=\sum_{1 \le k \le n} i_k=d.$ For each multi-index, the monomial $x^I=x_1^{i_1} \cdots x_n^{i_n}$ comes with coefficient $a_I \in \k.$ Assume $C=(c_{ij}) \in Z(f).$ Then by Lemma \ref{edc} we have
\[ \sum_{1 \le k \le n} \frac{\partial^2 f}{\partial x_i \partial x_k}c_{kj} = \sum_{1 \le k \le n} \frac{\partial^2 f}{\partial x_j \partial x_k}c_{ki}, \quad \forall i \ne j. \]
As the form $f$ is general (so one may view the $a_I$'s as indeterminates), for any fixing pair $i \ne j$ the union \[ \left\{ \frac{\partial^2 f}{\partial x_i \partial x_k} \right\}_{1 \le k \le n} \ \bigcup \ \left\{ \frac{\partial^2 f}{\partial x_j \partial x_k} \right\}_{1 \le k \le n}\] is a linearly independent set. So the previous equation induces $c_{ii}=c_{jj}$ and $c_{ij}=c_{ji}=0$ for all $i \ne j.$ That means a general form $f$ is central in the sense of Harrison \cite{h1}, i.e. $Z(f) \cong \k.$ It follows by items (4) and (7) of Proposition \ref{pc} that $f$ is absolutely indecomposable.
\end{proof}

\section{Main results}
In this section, take the ground field $\k =\C,$ the case of our interest. In order to obtain criteria and algorithms for diagonalizable forms we put their algebro-geometric properties, in particular the obvious smoothness, into consideration.

Recall that a form $f \in \C[x_1, \cdots, x_n]$ is called smooth, if the simultaneous equations \[\frac{\partial f }{ \partial x_1}= \cdots = \frac{\partial f }{ \partial x_n} =0\] have no nonzero solutions. In terms of the associated symmetric multilinear space $(V,\Theta),$ this is equivalent to saying that
\[ \Theta(u, \cdots, u, v_d)=0, \ \forall v_d \in V \ \Rightarrow \ u=0. \]
It is clear that, if $f$ is smooth, then it is nondegenerate.

It was observed by Harrison \cite{h1} that the smoothness of $f$ implies that its center $Z(f)$ has no nontrivial nilpotent elements. In the present situation, we have

\begin{lemma} \label{ls}
 Suppose $f \in \C[x_1, \cdots, x_n]$ is a smooth form of degree $d.$ Then $Z(f) \cong \C \times \cdots \times \C.$ Consequently, $\dim Z(f)$ is the number of indecomposable summands of $f$ and $\dim Z(f) \le n.$ In particular, $f$ is diagonalizable if and only if $\dim Z(f) = n.$
\end{lemma}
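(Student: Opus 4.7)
The plan is to combine Harrison's observation on smoothness with the structure theorem for finite-dimensional commutative reduced algebras, and then read off the remaining assertions from Proposition \ref{pc}.

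First I would invoke Harrison's result cited just before the statement: the smoothness of $f$ forces $Z(f)$ to contain no nontrivial nilpotent elements. Combined with Proposition \ref{pc}(1), the center $Z(f)$ is therefore a commutative, reduced, finite-dimensional $\C$-algebra (finite-dimensional because it sits inside $\operatorname{End}(V)$). Any such algebra is Artinian, and a commutative Artinian reduced ring is a finite product of fields. Because each factor is a finite field extension of the algebraically closed field $\C$, each factor must equal $\C$, so $Z(f) \cong \C \times \cdots \times \C$ with some number $m$ of factors. This is the conceptual heart of the argument, and also the only step where one has to be slightly careful—one should remark that an element of $Z(f) \subseteq \operatorname{End}(V)$ is nilpotent in $Z(f)$ if and only if it is nilpotent as an endomorphism, so Harrison's conclusion genuinely applies.

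Next I would translate this structural statement into the counting assertions using Proposition \ref{pc}. The decomposition $1 = \epsilon_1 + \cdots + \epsilon_m$ of the unit into the $m$ primitive orthogonal idempotents of $\C^m$ yields, via item (3), a direct sum decomposition $f = f_1 + \cdots + f_m$. By item (2) and item (7) the center of each summand is $\C$ (a local ring), so each $f_i$ is indecomposable by item (4). The uniqueness statement of item (6) then identifies $m$ with the number of indecomposable summands of $f$. Since each $f_i$ is nondegenerate and involves at least one variable, we get $m \le n$, which is the inequality $\dim Z(f) \le n$.

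Finally, for the diagonalizability criterion, the forward direction is immediate from Proposition \ref{pc}(5). For the converse, if $\dim Z(f) = n$, then the indecomposable decomposition $f = f_1 + \cdots + f_n$ has $n$ summands, each nondegenerate in a positive number of variables summing to $n$; hence every $f_i$ is a nondegenerate univariate form of degree $d$, necessarily of the form $\lambda_i l_i^d$ with $\lambda_i \ne 0$ and $l_i$ a linear form, so $f$ is diagonalizable. The only delicate point throughout is the first step (reducedness plus algebraic closure forces $Z(f) \cong \C^m$); everything after that is a direct application of the results already collected in Proposition \ref{pc}.
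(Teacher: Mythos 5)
Your proof is correct and follows the same path as the paper's: smoothness forces $Z(f)$ to be nilpotent-free, and since it is a commutative finite-dimensional $\C$-algebra it must be isomorphic to $\C^m$, after which the counting assertions and the diagonalizability criterion follow from Proposition \ref{pc}. The only differences are cosmetic: the paper re-derives Harrison's no-nilpotent observation in detail and appeals to Wedderburn--Artin, whereas you cite the observation and use the structure theorem for reduced commutative Artinian $\C$-algebras, and for the converse direction of the last claim the paper simply invokes item (5) rather than re-examining each univariate summand — all equivalent.
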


\begin{proof}
For the sake of completeness, we recall Harrison's argument that $Z(f)$ has no nontrivial nilpotent elements. Assume the contrary, there is a nilpotent element $\phi \in Z(f)$ with $\phi^{m+1}=0$ while $\phi^m \ne 0$ for some $m >1.$ Then there is some $v \in V$ such that $ \phi^m(v) \ne 0.$ Hence $$\Theta(\phi^m(v), \cdots, \phi^m(v), v_d)=\Theta(\phi^{(d-1)m}(v), \cdots, v, v_d)=0$$ for all $v_d \in V$ as $(d-1)m >m +1.$ Now the smoothness condition of $f$ forces $\phi^m(v)=0.$ This is absurd.

The rest is easy. Since $Z(f)$ is commutative with zero radical, by the Wedderburn-Artin theorem of semisimple algebras, $Z(f) \cong \C \times \cdots \times \C.$ Now by Proposition \ref{pc}, indecomposable direct summands of $f$ are in bijection with complete sets of orthogonal primitive idempotents of $Z(f).$ Hence the number of indecomposable direct summands of $f$ is exactly $\dim Z(f).$ As is obvious that this number can not exceed the number of variables of $f,$ hence $\dim Z(f) \le n.$

Finally, if $f$ is diagonalizable, then its number of indecomposable summands is $n,$ thus $\dim Z(f) = n.$ Conversely, if $\dim Z(f) = n,$ then $Z(f) \cong \C \times \cdots \times \C$ ($n$ copies) since $Z(f)$ is a semisimple commutative $\C$-algebra. It follows by item (5) of Proposition \ref{pc} that $f$ is diagonalizable.
\end{proof}

From now on, let $\V \subset \C[x_1, \cdots, x_n]$ denote the linear space of forms of degree $d$ in $n$ variables. Let $\U \subset \V$ be the set of smooth forms. It is well known that $\U$ is an open subset of $\V$ defined by an irreducible polynomial $\Delta_{n,d} \ne 0,$ where $\Delta_{n,d}$ is the discriminant  (see Chapter 13 in \cite{gkz}). Let $\D \subset \U$ denote the subset of diagonalizable forms.

In view of Proposition \ref{g}, diagonalizable higher degree forms should be very special and rare. This is explicitly confirmed in the following

\begin{proposition}\label{cd}
$\D$ is a proper closed subset of $\U.$
\end{proposition}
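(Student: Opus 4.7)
The plan is to combine the linear-algebraic description of the center in Lemma \ref{edc} (which makes $Z(f)$ the kernel of a linear operator whose matrix depends polynomially on the coefficients of $f$) with the dimension bound $\dim Z(f) \le n$ on $\U$ provided by Lemma \ref{ls}. This turns ``$f$ is diagonalizable'' into the condition that a certain linear system has a kernel of maximal possible dimension, which is a closed condition by standard upper semicontinuity.

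More precisely, for $f \in \V$ with Hessian $H=H(f)$, consider the linear map
\[
L_f \colon \C^{n\times n} \longrightarrow \C^{n\times n}, \qquad L_f(X) = (HX)^T - HX.
\]
By Lemma \ref{edc}, $Z(f) = \ker L_f$. The matrix entries of $L_f$ in the standard basis of $\C^{n\times n}$ are linear (in fact, polynomial) functions of the coefficients of $f$. Hence for each integer $r$, the set $\{f \in \V : \operatorname{rank}(L_f) \le r\}$ is Zariski closed (cut out by the vanishing of $(r+1)\times(r+1)$ minors), so equivalently
\[
\{ f \in \V : \dim Z(f) \ge k \} \subset \V
\]
is Zariski closed for every $k$. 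Intersecting with $\U$ and taking $k=n$ gives a closed subset of $\U$. By Lemma \ref{ls}, on $\U$ one has $\dim Z(f) \le n$ with equality exactly on $\D$, so this closed subset coincides with $\D$. This establishes that $\D$ is closed in $\U$.

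It remains to see that $\D \neq \U$, i.e.\ that $\D$ is a \emph{proper} closed subset. Here one invokes Proposition \ref{g}: a general $f \in \V$ satisfies $Z(f)\cong \C$, so in particular $\dim Z(f) = 1 < n$ whenever $n\ge 2$. The condition of being central is an open condition on $\V$ (its complement is cut out by the non-vanishing of certain minors of $L_f$), and smoothness is also a non-empty open condition cut out by $\Delta_{n,d}\neq 0$. Since both are nonempty Zariski open subsets of the irreducible variety $\V$, their intersection is nonempty, yielding a smooth form $f$ with $Z(f) = \C$. Such an $f$ lies in $\U\setminus \D$, so $\D \subsetneq \U$.

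The only mildly subtle point is the semicontinuity step; once one phrases the defining equations of $Z(f)$ as the kernel of a linear operator with coefficients polynomial in $f$, closedness is automatic from the minors-of-$L_f$ description and the uniform upper bound from Lemma \ref{ls}. Properness then reduces to exhibiting a single smooth non-diagonalizable form, which Proposition \ref{g} supplies abundantly.
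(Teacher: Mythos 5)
Your proof is correct and follows essentially the same route as the paper: express $Z(f)$ as the kernel of a linear operator (the paper's matrix $C$, your $L_f$) whose entries depend linearly on the coefficients of $f$, use Lemma \ref{ls} to identify $\D\cap\U$ with the locus where this kernel has the maximal allowed dimension $n$, conclude closedness by vanishing of minors, and get properness from Proposition \ref{g}. The only cosmetic difference is that you spell out the semicontinuity and the nonempty-open-intersection argument, which the paper leaves implicit.
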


\begin{proof}
Obviously, $\D$ is not empty. Thanks to Proposition \ref{g}, $\D$ is a proper subset of $\U.$ Note by Lemma \ref{edc} that the center $Z(f)$ of a form $f$ is the solution space of a system of linear equations, written simply as $Cy=0,$ which is obtained by combining together all the matrix equations therein. Consider the entries of $C$ as linear forms of the coefficients of a general form in $\U.$ Due to Lemma \ref{ls}, a form $f \in \U$ is diagonalizable if and only if $\dim Z(f) = n,$ thus if and only if $\rk C = n^2 -n.$ Again by Lemma \ref{ls}, we have $\rk C \ge n^2 -n$ since $n^2 -\rk C = \dim Z(f) \le n.$ Now it follows that $\D$ is a closed subset of $\U$ defined by all the $(n^2-n+1)$-minors of $C.$
\end{proof}

In summary, we have the following criteria for diagonalizable forms of higher degree. For convenience, we consider $Z(f) \subset \C^{n \times n}$ and let $D$ denote the subalgebra of $\C^{n \times n}$ consisting of all diagonal matrices.

\begin{theorem}\label{m}
Suppose $f \in \V$ is nondegenerate. Then the following statements are equivalent:
\begin{enumerate}
  \item The form $f$ is diagonalizable.
  \item The center $Z(f) \cong D.$
  \item $f$ is smooth and $\dim Z(f) =n.$
  \item $Z(f)$ is semisimple and $\dim Z(f) =n.$
  \item $\dim Z(f)=n$ and any basis of $Z(f)$ consists of diagonalizable matrices.
  \item $\dim Z(f)=n$ and $Z(f)$ has a basis consisting of rank 1 and trace 1 matrices.
\end{enumerate}
\end{theorem}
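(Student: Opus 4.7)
The plan is to organize the equivalences as a cycle built around the core chain $(2) \Leftrightarrow (1) \Rightarrow (3) \Rightarrow (4) \Rightarrow (1)$, and then attach items $(5)$ and $(6)$ via the extra loop $(1) \Rightarrow (6) \Rightarrow (5) \Rightarrow (2)$. Each implication should be short, leveraging the structural results already established in Section 3 and Lemma \ref{ls}.

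For the core equivalences, $(1) \Leftrightarrow (2)$ is essentially a repackaging of Proposition \ref{pc}(5), using the obvious isomorphism $D \cong \C \times \cdots \times \C$ of $\C$-algebras given by the diagonal entries. For $(1) \Rightarrow (3)$: by nondegeneracy the diagonalization has $n$ terms, so after changing variables and rescaling we may assume $f = x_1^d + \cdots + x_n^d$, whose partial derivatives $d\, x_i^{d-1}$ vanish simultaneously only at the origin, confirming smoothness; the equation $\dim Z(f) = n$ is contained in $(2)$. The implication $(3) \Rightarrow (4)$ is immediate from Lemma \ref{ls}, which under smoothness forces $Z(f) \cong \C \times \cdots \times \C$ and in particular semisimple. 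For $(4) \Rightarrow (1)$, the Wedderburn-Artin theorem identifies any commutative semisimple finite-dimensional $\C$-algebra with a product $\C^m$; the dimension assumption forces $m = n$, and Proposition \ref{pc}(5) then yields diagonalizability.

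To absorb items $(5)$ and $(6)$: for $(1) \Rightarrow (6)$, after a change of coordinates the diagonalized form $g$ satisfies $Z(g) = D$, for which the matrix units $E_{11}, \ldots, E_{nn}$ form a basis of rank-$1$ trace-$1$ idempotents; transporting back via Remark \ref{cb} by the change-of-basis matrix $P$ gives a basis of $Z(f)$ of the same kind, since rank and trace are conjugation invariants. For $(6) \Rightarrow (5)$, any rank-$1$ matrix $M$ factors as $uv^T$, and a direct computation yields $M^2 = (v^T u)\,uv^T = \operatorname{tr}(M)\,M = M$ when the trace is $1$; hence $M$ is an idempotent and therefore diagonalizable. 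Finally, for $(5) \Rightarrow (2)$, nondegeneracy of $f$ combined with Proposition \ref{pc}(1) shows that $Z(f)$ is a commutative subalgebra of $\C^{n \times n}$, so the prescribed basis in $(5)$ is a commuting family of diagonalizable matrices; by the classical simultaneous-diagonalization theorem, there exists $P \in \GL(n, \C)$ with $P^{-1} Z(f) P \subseteq D$, and the dimension equality $\dim Z(f) = n = \dim D$ forces $P^{-1} Z(f) P = D$, giving $Z(f) \cong D$.

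I expect the most substantive step to be $(5) \Rightarrow (2)$, which rests on two distinct ingredients: the commutativity of $Z(f)$ (which requires nondegeneracy via Proposition \ref{pc}(1)) and the classical fact that a commuting family of diagonalizable matrices may be simultaneously diagonalized. The remaining implications reduce either to an explicit computation on the Fermat-type normal form $x_1^d + \cdots + x_n^d$ or to a direct invocation of Proposition \ref{pc} and Lemma \ref{ls}.
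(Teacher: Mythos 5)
Your proof is correct, and the underlying mathematics matches the paper's argument quite closely; the only real difference is one of organization. The paper dispatches $(1)\Leftrightarrow(2)\Leftrightarrow(3)\Leftrightarrow(4)$ in a single citation of Lemma \ref{ls} and then shows each of $(5)$ and $(6)$ is equivalent to $(2)$ independently (with $(6)\Rightarrow(2)$ going directly from the observation that rank-$1$ trace-$1$ matrices are idempotent), whereas you arrange the equivalences as a single cycle $(2)\Leftrightarrow(1)\Rightarrow(3)\Rightarrow(4)\Rightarrow(1)$ augmented by $(1)\Rightarrow(6)\Rightarrow(5)\Rightarrow(2)$. Your version is slightly more explicit in two spots the paper leaves implicit: the verification that a diagonalizable form is smooth (by passing to the Fermat normal form) in $(1)\Rightarrow(3)$, and the intermediate step that a rank-$1$ trace-$1$ matrix, being idempotent, is automatically diagonalizable, which lets you absorb $(6)$ into $(5)$ rather than argue $(6)\Rightarrow(2)$ afresh. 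Both routes ultimately rest on the same three ingredients: Proposition \ref{pc}(5), Lemma \ref{ls} (via Wedderburn--Artin on the commutative semisimple $Z(f)$), and simultaneous diagonalization of a commuting family of diagonalizable matrices.
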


\begin{proof}
The equivalence of (1), (2), (3) and (4) is already contained in Lemma \ref{ls}. Clearly, (2) implies (5) and (6). Assume (5), then by a simultaneous diagonalization of any basis (since $Z(f)$ is commutative) one easily obtains $Z(f) \cong D.$ Finally assume (6). Note that a rank 1 and trace 1 matrix is obviously idempotent. The condition of (6) says exactly that $Z(f)$ has a basis consisting of $n$ commuting idempotent matrices. This immediately implies $Z(f) \cong D.$
\end{proof}

Now we turn to the problem of orthogonal or unitary diagonalizations of forms and symmetric tensors. This has been intensively studied in e.g. \cite{ro, bdhr, ko} and called symmetrically odeco (over $\R$) or udeco (over $\C$) therein. Apparently orthogonally or unitarily diagonalizable forms are \emph{a priori} diagonalizable. On the other hand, notice that the diagonalization (if exists) of a form is essentially unique due to the seminal work of Harrison \cite{h1}. So the key is the diagonalization, and the orthogonality or unitarity is a property which can be checked directly. Therefore, it is very easy to decide whether or not a higher degree form or symmetric tensor is orthogonally or unitarily diagonalizable based upon the previous theorem.

\begin{corollary} \label{ou}
Suppose $f \in \D$ and $P=(p_{ij})_{1 \le i, j \le n}$ is an invertible matrix such that \[ f= \sum_{1 \le i \le n} \alpha_i\left(\sum_{1 \le j \le n} p_{ij}x_j\right)^d.\] Then $f$ is orthogonally (resp. unitarily) diagonalizable if and only if, after appropriate scaling of each row, $P$ is orthogonal (resp. unitary).
\end{corollary}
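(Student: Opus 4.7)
The plan is to derive the corollary from the essential uniqueness of diagonalization of $f$, which in turn follows from Harrison's uniqueness theorem (Proposition \ref{pc}(6)) applied in the special case $Z(f) \cong \C \times \cdots \times \C$. The first step is to note that in this semisimple commutative algebra the primitive idempotents $\epsilon_1, \ldots, \epsilon_n$ are unique up to permutation of the $n$ factors, hence so are the one-dimensional images $V_i := \Im \epsilon_i$ from Proposition \ref{pc}(3), and dually the diagonalizing linear forms $l_1, \ldots, l_n$ are determined up to individual rescaling and permutation.

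The direction ($\Leftarrow$) is a direct calculation: if nonzero scalars $c_1, \ldots, c_n$ are such that $Q := (c_i p_{ij})$ is orthogonal (resp.\ unitary), I rewrite each summand as $\alpha_i l_i^d = (\alpha_i/c_i^d)(c_i l_i)^d$ and read off an orthogonal (resp.\ unitary) diagonalization whose change-of-variables matrix is precisely $Q$.

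For the converse ($\Rightarrow$), I assume an orthogonal (resp.\ unitary) diagonalization $f = \sum_i \beta_i m_i^d$ is given with matrix $Q = (q_{ij})$ and linear forms $m_i = \sum_j q_{ij} x_j$. Comparing it with the $P$-diagonalization via the uniqueness observation above, I obtain a permutation $\sigma$ of $\{1, \ldots, n\}$ and nonzero scalars $c_i$ so that $m_{\sigma(i)} = c_i l_i$, or equivalently $q_{\sigma(i), j} = c_i p_{ij}$ for all $i, j$. Thus rescaling the $i$-th row of $P$ by $c_i$ produces a matrix whose rows are merely a permutation of the rows of $Q$. Since row permutations preserve orthogonality and unitarity (they act by an orthogonal/unitary left multiplication), the rescaled $P$ is itself orthogonal (resp.\ unitary).

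The main point to watch is the first step: translating the abstract phrase ``unique up to equivalence and permutation'' in Proposition \ref{pc}(6) into the concrete claim that each $l_i$ is determined up to a scalar and up to a permutation. I expect this to be routine once I invoke the uniqueness of primitive idempotents in $\C \times \cdots \times \C$ together with the coordinate-free interpretation of the $l_i$ as a dual basis to any generators of the canonical lines $V_i = \Im \epsilon_i$; nonetheless this is the conceptual crux, and everything else reduces to elementary linear algebra.
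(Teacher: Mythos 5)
Your proposal is correct and follows essentially the same path as the paper: both directions hinge on Harrison's uniqueness result (Proposition \ref{pc}(6)) to conclude that any two diagonalizing changes of variables differ only by a permutation and row-scaling, so orthogonality or unitarity of one transfers to the other after rescaling. The paper states this more tersely (citing item (6) and calling the converse trivial), while you unpack why the uniqueness reduces to uniqueness of primitive idempotents in $\C \times \cdots \times \C$, but the underlying argument is the same.
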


\begin{proof}
Assume $f$ is orthogonally (resp. unitarily) diagonalizable, that is there exist an orthogonal (resp. unitary) matrix $Q=(q_{ij})_{1 \le i, j \le n}$ such that \[ f= \sum_{1 \le i \le n} \lambda_i\left(\sum_{1 \le j \le n} q_{ij}x_j\right)^d. \] By item (6) of Proposition \ref{pc}, there exists $\tau_i \in \C^*$ for all $1 \le i \le n$ such that \[ \sum_{1 \le j \le n} q_{\sigma(i)j}x_j = \tau_i \sum_{1 \le j \le n} p_{ij}x_j, \] where $\sigma$ is a permutation of $\{ 1, \dots, n\}.$ That is, $P$ is orthogonal (resp. unitary) up to scaling by a suitable diagonal matrix. The converse is trivial.
\end{proof}

As an example of our approach to symmetrically odeco and udeco, we give a simple proof for a main result in \cite{bdhr, ko}: the set of symmetrically odeco tensors can be described by equations of degree 2. We will adopt the version of characterization for degree 3 in \cite[Theorems 3.3 and 3.6]{ko}, but prove the result for all degrees. In the following we use the notations of Lemma \ref{edc} and we stick to the notions adopted in \cite{bdhr, ko} for coherence.

\begin{proposition}
A real (resp. complex) symmetric tensor $A=(a_{i_1 \cdots i_d})_{1 \le i_1, \cdots, i_d \le n}$ is odeco if and only if the $A^{(i_3 \cdots i_d)}$'s pairwise commute (resp.  and are diagonalizable).
\end{proposition}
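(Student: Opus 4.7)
The plan is to recognize, via Lemma \ref{edc}, that the slices $A^{(i_3 \cdots i_d)}$ sit inside $Z(f)$ exactly when they pairwise commute, and then to convert odeco decompositions of $A$ into simultaneous diagonalizations of these slices by an orthogonal change of coordinates.

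First I would observe that each $A^{(i_3 \cdots i_d)}$ is itself a symmetric matrix, since $A$ is a symmetric tensor. Plugging the symmetric matrix $X = A^{(j_3 \cdots j_d)}$ into the defining condition $X^T A^{(i_3 \cdots i_d)} = A^{(i_3 \cdots i_d)} X$ of Lemma \ref{edc} makes $X^T = X$, so the condition collapses to $A^{(j_3 \cdots j_d)} A^{(i_3 \cdots i_d)} = A^{(i_3 \cdots i_d)} A^{(j_3 \cdots j_d)}$. Hence the slices themselves belong to $Z(f)$ precisely when they pairwise commute, and the extra diagonalizability assumption in the complex case amounts to saying that the commutative subalgebra they generate is semisimple.

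For the forward direction, starting from $A = \sum_{i=1}^n \lambda_i v_i^{\otimes d}$ with $v_i^T v_j = \delta_{ij}$, one computes
\[
A^{(i_3 \cdots i_d)} = \sum_{i=1}^n \lambda_i (v_i)_{i_3} \cdots (v_i)_{i_d}\, v_i v_i^T.
\]
The rank-one symmetric matrices $E_i := v_i v_i^T$ satisfy $E_i E_j = (v_i^T v_j)\, v_i v_j^T = \delta_{ij} E_i$, so every slice lies in the commutative semisimple algebra $\bigoplus_{i=1}^n \C E_i \cong \C^n$ spanned by $n$ orthogonal idempotents; in particular the slices commute and, in the complex case, are diagonalizable.

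The backward direction is the main technical step and reduces to the following fact: a family $\{M_\alpha\}$ of real (resp.\ complex) symmetric matrices that pairwise commute (resp.\ and are individually diagonalizable) admits an invertible $Q$ with $Q^T Q = I$ such that $Q^T M_\alpha Q$ is diagonal for every $\alpha$. I would prove this by decomposing the ambient space into the common eigenspaces $V = \bigoplus V_i$ of the family and checking that this decomposition is orthogonal with respect to the bilinear form $v^T w$: for $v \in V_i$, $w \in V_j$ with $i \ne j$, choosing $\alpha$ with $\lambda_\alpha(i) \ne \lambda_\alpha(j)$ one computes
\[
\lambda_\alpha(i)\, v^T w = (M_\alpha v)^T w = v^T M_\alpha w = \lambda_\alpha(j)\, v^T w,
\]
forcing $v^T w = 0$. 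Because the standard bilinear form $v^T w$ is nondegenerate on the whole space, nondegeneracy propagates to each $V_i$, which then admits a basis $\{e_k^{(i)}\}$ with $(e_k^{(i)})^T e_l^{(i)} = \delta_{kl}$; concatenating these produces $Q$. By Remark \ref{cb}, every slice of $A' := A Q^d$ is a linear combination of the diagonal matrices $Q^T A^{(i_3 \cdots i_d)} Q$ and hence itself diagonal, and the total symmetry of $A'$ upgrades this slice-diagonality (vanishing of $a'_{j_1 j_2 j_3 \cdots j_d}$ for $j_1 \ne j_2$) to full diagonality. Reading off the columns of $Q$ as the vectors $v_i$ then gives the odeco decomposition $A = \sum_i \alpha_i v_i^{\otimes d}$ with $v_i^T v_j = \delta_{ij}$. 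The main obstacle is exactly this upgrade from an arbitrary simultaneous diagonalizer to one satisfying $Q^T Q = I$, which is what compels the detour through the common eigenspace decomposition.
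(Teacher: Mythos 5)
Your proof is correct, but takes a genuinely more elementary and self-contained route than the paper, especially in the backward direction. The paper goes through the center algebra: it treats the simultaneous orthogonal diagonalization of commuting symmetric matrices as a known fact, uses it together with Remark \ref{cb} to compute $Z(A)=O\operatorname{D}_n(\R)O^T$, and then invokes Proposition \ref{pc}, items (3) and (5), to conclude $A$ is odeco; in the forward direction it likewise argues via the transformation rule $Z(A)=O\operatorname{D}_n(\R)O^T$ rather than computing slices. You instead bypass the structure theorem for centers entirely: for the backward direction you prove the simultaneous orthogonal diagonalization from scratch (common eigenspaces, then showing they are mutually orthogonal for the bilinear form $v^T w$ by the $(\lambda_\alpha(i)-\lambda_\alpha(j))\,v^Tw=0$ computation, then nondegeneracy on each eigenspace), then use Remark \ref{cb} to show the slices of $AQ^d$ are diagonal, upgrade to full diagonality of $AQ^d$ by the tensor symmetry, and read off the odeco decomposition from the columns of $Q$; your forward direction is a direct computation of the slices from $A=\sum_i\lambda_i v_i^{\otimes d}$ using the orthogonal idempotents $E_i=v_iv_i^T$. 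Your route is more explicit about the complex case, where the paper says only ``the complex case is similar'' though the extra diagonalizability hypothesis really does matter (complex symmetric matrices need not be diagonalizable, and a complex orthogonal $Q$ only exists because each common eigenspace carries a nondegenerate bilinear form), and your argument does not need the paper's reduction to $\rk A=n$. What the paper's approach buys is brevity and cohesion with the center-algebra framework of Section 3; what yours buys is transparency about exactly where each hypothesis is used.
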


\begin{proof}
We only prove the real case, as the complex case is similar. In addition, we may assume without loss of generality that $\rk A=n.$

Assume $A$ is odeco. Then there exists an orthogonal matrix $O$ such that $AO^d$ is diagonal. By Lemma \ref{edc}, it is immediate that $Z(AO^d)=\operatorname{D}_n(\R),$ the subalgebra of $\R^{n \times n}$ consisting of all diagonal matrices. It follows by Remark \ref{cb} that $Z(A)=O\operatorname{D}_n(\R)O^T.$ Again by Lemma \ref{edc}, $O^TA^{(i_3 \cdots i_d)}O$ commutes with $\operatorname{D}_n(\R)$ for all $1 \le i_3, \cdots, i_d \le n.$ Thus $O^TA^{(i_3 \cdots i_d)}O$ is diagonal for all $1 \le i_3, \cdots, i_d \le n.$ It follows right away that the $A^{(i_3 \cdots i_d)}$'s pairwise commute.

Conversely, assume the $A^{(i_3 \cdots i_d)}$'s pairwise commute. As they are all symmetric, there is an orthogonal matrix $O$ such that $O^TA^{(i_3 \cdots i_d)}O$ is diagonal for all $1 \le i_3, \cdots, i_d \le n.$ As $\rk A=n,$ the linear span of all the $O^TA^{(i_3 \cdots i_d)}O$ is $\operatorname{D}_n(\R).$ Then by Lemma \ref{edc}, we have $Z(A)=O\operatorname{D}_n(\R)O^T.$ Now It follows by items (3) and (5) of Proposition 3.6 that $A$ is odeco.
\end{proof}

The previous simple criteria provide several ways to detect and determine the (orthogonal, or unitary) diagonalizations of higher degree forms. For example, by (3) compute $\dim Z(f)$ (via the rank of the matrix $C$ in the proof of Proposition \ref{cd}) and detect the smoothness of $f$ (e.g., by the Jacobian criterion); by (4), compute $Z(f)$ and detect the semisimplicity of a basis (via the minimal polynomial of any basis element, using Euclid algorithm to detect whether the polynomial has multiple roots); by (5), compute $Z(f)$ and simultaneously diagonalize a basis; by (6), compute $Z(f)$ and find a complete set of orthogonal primitive idempotents from any chosen basis of $Z(f).$ We will focus on the fourth and give in detail a theoretical algorithm later on. The reason is two-fold: on the one hand, it detects and determines the explicit diagonalization simultaneously; on the other hand, it is relatively simple involving only linear and quadratic equations.

\begin{algorithm} \label{a}
Take any $f \in \V.$ Let $A$ be the associated symmetric $d$-tensor.
\begin{enumerate}
  \item[] Step 1: Detect the nondegeracy. \emph{Compute $\rk A$ according to Lemma \ref{rof}. If $\rk A=n,$ then continue; otherwise, say $\rk A=r<n,$ reduce the form $f$ into a nondegenerate one and restart in $\operatorname{V}_{r,d}$.}\\
  \item[] Step 2: Compute the center. \emph{Solve the linear equations as in Lemma \ref{edc}. If $\dim Z(f) < n,$ then $f$ is not diagonalizable and we stop; otherwise, choose a basis $P_1, \dots, P_n$ of $Z(f).$}\\
  \item[] Step 3: Compute the idempotents. \emph{Let $\lambda_1, \dots, \lambda_n$ be indeterminates. Consider the matrix $\epsilon=\sum_{1 \le i \le n} \lambda_i P_i$ and impose the conditions: $\rk \epsilon=1$ (i.e., all the $2 \times 2$-minors are zero) and $\operatorname{trace}(\epsilon)=1.$ Solve the equations. If there exists no solutions, then $f$ is not diagonalizable and we stop; otherwise choose an $\epsilon$ and continue.}\\
  \item[] Step 4: Diagonalize the form. \emph{Decompose $f$ according to the pair $(\epsilon, 1-\epsilon)$ of orthogonal idempotents as in Proposition \ref{pc}. Then we have $f=l_1(x_1, \dots, x_n)^d + g$ where $g$ is a nondegenerate form of degree $d$ in $n-1$ variables which are linear forms of $x_1, \dots, x_n$ independent from $l_1.$ Then return to step 1 and replace $f$ by $g \in \operatorname{V}_{n-1,d}.$ If the input $f$ is diagonalizable, then in $n-1$ steps the procedure stops and we end up with a diagonalization of $f.$}\\
  \item[] Step 5: Check orthogonality or unitarity. \emph{By direct computation, verify whether the resulting change of variables is orthogonal, or unitary, or neither.}
\end{enumerate}

\end{algorithm}

\begin{remark}
In principle, our algorithm can be easily adapted to diagonalize higher degree forms over none algebraically closed fields. Quadratic extensions of the ground field may be necessary to compute the idempotents. In comparison with some previous works \cite{bdhr, cm, ka, ko, ks, ro}, our algorithm seems more elementary. In particular, we don't have to compute eigenvalues and eigenvectors (which obviously involve solving higher degree algebraic equations) and no factorization of multivariate polynomials is needed.
\end{remark}

\section{Some examples}
In this section, we provide some examples which are solvable by hand. There are two main steps. The first is a routine calculation of linear equations according to Lemma \ref{edc} which gives centers as linear spaces. The second is to determine the algebraic structure of centers and single out idempotents to decompose the forms according to Steps 3 and 4 of Algorithm \ref{a}. Along the way, one can also see clearly how to decompose forms over none algebraically closed fields.

\begin{example}[Binary quartics]
Let $f_t=x_1^4+x_2^4+tx_1^2x_2^2 \in \k[x_1, \ x_2].$ Then it is easy to see that $f_t$ is smooth if and only if $t \neq \pm 2.$ Moreover, if $t=0,$ then clearly $f_t$ is diagonal. In the following suppose $t \neq 0, \ \pm 2.$ By an easy calculation we have $Z(f_t) \cong \k$ if $t \neq \pm 6,$ and $Z(f_{6}) \cong \k \times \k,$  $Z(f_{-6}) \cong \k[x]/(x^2+1).$ Therefore, if $t \neq \pm 6,$ then $f_t$ is absolutely indecomposable; $f_6= \frac{1}{2}[(x_1-x_2)^4 +(x_1+x_2)^4]$ is diagonalizable; if $\sqrt{-1} \in \k,$ then $f_{-6}= \frac{1}{2}[(x_1-\sqrt{-1} x_2)^4 +(x_1+\sqrt{-1} x_2)^4]$ is diagonalizable, otherwise $f_{-6}$ is indecomposable (but not absolutely). In addition, if $\sqrt{2} \in \k$ as well, then $f_6$ is orthogonally diagonalizable and $f_{-6}$ is unitarily diagonalizable.
\end{example}

\begin{example}[Ternary cubics]
Consider the normal form of nonsingular ternary cubics $f_\lambda=x_1^3+x_2^3+x_3^3+6\lambda x_1x_2x_3.$ Clearly $f_0$ is already diagonal. In the following suppose $\lambda \neq 0.$ By a straightforward computation we have $Z(f_\lambda)\cong\k$ if $\lambda^3 \neq 1,$ thus $f_\lambda$ is absolutely indecomposable in this case. If $\lambda^3=1$ but $\lambda \ne 1,$ then $Z(f_\lambda) \cong \k \times \k \times \k,$ consequently $f_\lambda=\frac{1}{3}[(\lambda x_1+x_2+x_3)^3+(x_1+\lambda x_2+x_3)^3+(x_1+x_2+\lambda x_3)^3]$ is diagonalizable (but neither orthogonal nor unitary). If $\lambda=1,$ then $Z(f_1) \cong \k \times \k[x]/(x^2+x+1).$ In this case, according to the explanation after Proposition \ref{pc} take the change of variables $x_1=y_1+y_2, \ x_2=y_1+y_3, \ x_3=y_1-y_2-y_3$ and we have the corresponding direct sum decomposition $f_1=9y_1^3-9(y_2^2y_3+y_2y_3^2).$ If $\k$ does not contain a primitive cubic root of unity, then $\k[x]/(x^2+x+1)$ is a field and thus $y_2^2y_3+y_2y_3^2$ is indecomposable. If $\omega \in \k$ is a primitive cubic root of unity, then $\k[x]/(x^2+x+1) \cong \k \times \k$ and we can further decompose $y_2^2y_3+y_2y_3^2.$ In this case we have the diagonalization of $f_1$ as
$$\frac{1}{3}[(x_1+x_2+x_3)^3+(x_1+\omega x_2 + \omega^2 x_3)^3+(x_1+\omega^2 x_2 + \omega x_3)^3].$$ Furthermore, if $\k$ also contains $\sqrt{3},$ then $f_1$ is unitarily diagonalizable.
\end{example}

\begin{example}
Consider the cubic form $$f= x_1^3 -3x_1^2x_2 + 3x_1x_2^2 + 3x_1^2x_3 + 3x_1x_3^2 - 6x_1x_2x_3 + 13x_2^3 - 3x_2^2x_3 - 9x_2x_3^2 + 15x_3^3$$ in $\Q[x_1, \ x_2, \ x_3].$ In accordance with Lemma \ref{edc}, let
\[ A^{(1)}=
\left(
\begin{array}{ccc}
 1 & -1  & 1  \\
 -1 &  1 & -1  \\
 1 & -1  &  1
\end{array}
\right), \quad
 A^{(2)}=
\left(
\begin{array}{ccc}
-1  & 1  & -1  \\
1  & 13  & -1  \\
 -1 & -1  & -3
\end{array}
\right), \quad
A^{(3)}=
\left(
\begin{array}{ccc}
1  & -1  & 1  \\
-1  &  -1 & -3  \\
 1 & -3 &  15
\end{array}
\right).
\]
Then by a direct calculation, $Z(f)=\{ X \in \Q^{3 \times 3} \mid A^{(i)}X=X^TA^{(i)}, \ 1 \le i \le 3 \}=\bigoplus_{1 \le i \le 3} \Q X^{(i)},$ where
\[ X^{(1)}=
\left(
\begin{array}{ccc}
 1 & -1 & 1  \\
 0 &  0 & 0  \\
 0 &  0 & 0
\end{array}
\right), \quad
 X^{(2)}=
\left(
\begin{array}{ccc}
0  & 1  & -1  \\
 0 &  1 & 0   \\
 0 &  0 & 1
\end{array}
\right), \quad
X^{(3)}=
\left(
\begin{array}{ccc}
0  & 1  & -5  \\
 0 &  0 & 1  \\
 0 &  -1 & 6
\end{array}
\right).
\]
Note that $X^{(1)}$ is of rank 1 and trace 1, hence it is a primitive idempotent. Then according to the pair $(X^{(1)}, I_3-X^{(1)})$ of idempotents we have the following decomposition
\[ (x_1 - x_2 +x_3)^3 +(14x_2^3-6x_2^2x_3-6x_2x_3^2+14x_3^3). \] Apply the same process to $g:=14x_2^3-6x_2^2x_3-6x_2x_3^2+14x_3^3.$ In fact, we can read from the $X^{(i)}$'s that $Z(g)=\Q\left(
                                                                                                   \begin{array}{cc}
                                                                                                     1 & 0 \\
                                                                                                     0 & 1 \\
                                                                                                   \end{array}
                                                                                                 \right)
\oplus \Q\left(
            \begin{array}{cc}
              0 & 1 \\
              -1 & 6 \\
            \end{array}
          \right)
\cong \Q[\sqrt{2}].$ This is a quadratic extension of $\Q.$ It follows immediately that $g$ is indecomposable over $\Q.$ Consequently $Z(f) \cong \Q \times \Q[\sqrt{2}]$ and $f$ is a direct sum but not diagonalizable over $\Q.$

However, over any field extension $K/\Q$ with $\sqrt{2} \in K,$ one has easily $Z(f) \otimes _\Q K \cong K \times K \times K$ and $f$ is diagonalizable over $K:$
\[ (x_1 - x_2 +x_3)^3 + [(1+\sqrt{2})x_2+(1-\sqrt{2})x_3)]^3 + [(1-\sqrt{2})x_2 + (1+\sqrt{2})x_3]^3 .\]
Finally by Corollary \ref{ou}, it is clear that any diagonalization of $f$ is neither orthogonal nor unitary.
\end{example}

\begin{example}
Consider the following rational quartic form
\begin{equation*}
\begin{aligned}
f=&x_1^4+4x_1^3x_2+4x_1^3x_3+4x_1^3x_4-12x_1^2x_2^2+12x_1^2x_2x_3-24x_1^2x_2x_4-12x_1^2x_3^2-24x_1^2x_3x_4+24x_1^2x_4^2\\
&+4x_1x_2^3-24x_1x_2^2x_3+12x_1x_2^2x_4-24x_1x_2x_3^2+96x_1x_2x_3x_4-24x_1x_2x_4^2+4x_1x_3^3+12x_1x_3^2x_4\\
&-24x_1x_3x_4^2+4x_1x_4^3+x_2^4+4x_2^3x_3+4x_2^3x_4+24x_2^2x_3^2-24x_2^2x_3x_4-12x_2^2x_4^2+4x_2x_3^3-24x_2x_3^2x_4\\
&+12x_2x_3x_4^2+4x_2x_4^3+x_3^4+4x_3^3x_4-12x_3^2x_4^2+4x_3x_4^3+x_4^4.
\end{aligned}
\end{equation*}
Similar to the previous example, let $A$ denote the Gram tensor of $f$ and set $A^{(i_3i_4)}$ as in Lemma \ref{edc} to compute the center to get $Z(f)=\bigoplus_{1 \le i \le 4} \Q X^{(i)},$ where
\begin{equation*}
\begin{aligned}
X^{(1)}=
\left(
\begin{array}{cccc}
 1 & 0  & 0 & 0\\
 0 & 1  & 0 & 0\\
 0 & 0  & 1 & 0\\
 0 & 0  & 0 & 1\\
\end{array}
\right), &\quad
X^{(2)}=
\left(
\begin{array}{cccc}
 0 & 1  & 0 & 0\\
 -1 & 1  & 0 & 0\\
 0 & 0  & 0 & 1\\
 0 & 0  & -1 & 1\\
\end{array}
\right), \\
X^{(3)}=
\left(
\begin{array}{cccc}
 0 & 0  & 1 & 0\\
 0 & 0  & 0 & 1\\
 -1 & 0  & 1 & 0\\
 0 & -1  & 0 & 1\\
\end{array}
\right), &\quad
X^{(4)}=
\left(
\begin{array}{cccc}
 0 & 0  & 0 & 1\\
 0 & 0  & -1 & 1\\
 0 & -1  & 0 & 1\\
 1 & -1  & -1 & 1\\
\end{array}
\right).
\end{aligned}
\end{equation*}
Denote $Y=\left(
            \begin{array}{cc}
              0 & 1 \\
              -1 & 1 \\
            \end{array}
          \right)$
and notice that $X^{(2)}=I_2 \otimes Y, \ X^{(3)}=Y \otimes I_2$ and $X^{(4)}=Y \otimes Y.$ Now it is ready to see that $$Z(f) \cong \Q[x]/(x^2-x+1) \otimes \Q[x]/(x^2-x+1) \cong \Q[\sqrt{-3}] \times \Q[\sqrt{-3}].$$ So $f$ is not diagonalizable over $\Q$ by Theorem \ref{m}, but is a direct sum of two rational indecomposable forms by item (3) of Proposition \ref{pc}.
In light of the structure of $Z(f),$ it is not hard to find out a pair of orthogonal idempotents:
\[ \epsilon_1 = \frac{1}{3}\left(
                             \begin{array}{cccc}
                               2 & -1 & -1 & 2 \\
                               1 & 1 & -2 & 1 \\
                               1 & -2 & 1 & 1 \\
                               2 & -1 & -1 & 2 \\
                             \end{array}
                           \right), \quad
  \epsilon_2 = \frac{1}{3}\left(
                             \begin{array}{cccc}
                               1 & 1 & 1 & -2 \\
                               -1 & 2 & 2 & -1 \\
                               -1 & 2 & 2 & -1 \\
                               -2 & 1 & 1 & 1 \\
                             \end{array}
                           \right). \]
Accordingly we take the following change of variables
\begin{gather*}
  x_1 = \frac{1}{3}(y_1+y_2+y_3), \quad x_2=\frac{1}{3}(y_1+y_3+y_4), \\
  x_3 = \frac{1}{3}(y_2+y_3+y_4), \quad x_4=\frac{1}{3}(y_1+y_2+y_4).
\end{gather*}
The corresponding direct sum decomposition is
$f=t(y_1,y_2)+t(y_3,y_4)$
with $$t(\alpha,\beta)=\frac{1}{9}(-\alpha^4+4\alpha^3\beta+12\alpha^2\beta^2+4\alpha\beta^3-\beta^4).$$

Furthermore over any field extension $K/\Q$ with $\sqrt{-3} \in K,$ the center $Z(f) \otimes_\Q K \cong K \times K \times K \times K$ and $f$ can be diagonalized as:
\[(x_1+\omega x_2+\omega^2x_3+x_4)^4+(\omega x_1+x_2+x_3+\omega^2x_4)^4+(\omega^2x_1+x_2+x_3+\omega x_4)^4+(x_1+\omega^2x_2+\omega x_3+x_4)^4 \] where $\omega=-\frac{1}{2}+\frac{\sqrt{-3}}{2}.$
Therefore any diagonalization of $f$ is neither orthogonal nor unitary.
\end{example}

\end{document}